\newcommand{\Z}{{\mathbb{Z}}}
\newcommand{\Q}{{\mathbb{Q}}} 
\newcommand{\N}{{\mathbb{N}}}
\newcommand{\lra}{\longrightarrow}
\newcommand{\cyc}{\mathrm{cyc}}
\newcommand{\La}{\Lambda}
\newtheorem{theorem}{Theorem}
\newtheorem*{theorem*}{Theorem}
\newtheorem{rem}[theorem]{Remark}
\newtheorem*{rem*}{Remark}
\newtheorem{lemma}[theorem]{Lemma}
\newtheorem*{example}{Examples}
\newtheorem{claim}{Claim}
\newtheorem*{maintheorem}{Main Theorem}
\newtheorem*{preliminarytheorem}{Preliminary Theorem}
\newtheorem*{keylemma}{Key Lemma}
\author{Somnath Jha, Tadashi Ochiai, Gergely Z\'{a}br\'{a}di} \thanks{S. Jha gratefully acknowledges the support of JSPS postdoctoral fellowship and  DST INSPIRE faculty award  grant. T. Ochiai is partially supported for this work by KAKENHI (Grant-in-Aid for Exploratory Research: Grant Number 24654004, 
Grant-in-Aid for Scientific Research (B): Grant Number 26287005). G. Z\'abr\'adi was supported by a Hungarian OTKA Research grant K-100291 and by the J\'anos Bolyai Scholarship of the Hungarian Academy of Sciences.}\address{Department of Mathematics and Statistics, Indian Institute of Technology Kanpur,  Kanpur 208016, India} \email{jhasom@iitk.ac.in} \address{Department of Mathematics, Graduate School of Science, Osaka University, Machikaneyama 1-1, Toyonaka, Osaka 5600043, Japan}\email{ochiai@math.sci.osaka-u.ac.jp} 
\address{E\"{o}tv\"{o}s Lor\'{a}nd University, Mathematical Institute, 
Department of Algebra and Number Theory, 
1111 Budapest, Bertalan Lajos utca 11, Hungary}\email{zger@cs.elte.hu}
\begin{document}
\title{On twists of modules over non-commutative Iwasawa algebras}
\subjclass[2010]{11R23, 16S50}
\keywords{Selmer group, non-commutative Iwasawa theory}
\begin{abstract}  
%We know   $\Z_p[[T]]/{(T, (1+T)^{p^n} -1)}$ is infinite for each $n \in \N$. But if we make a linear change of variable $T \lra T -p$ then $\Z_p[[T]]/{(T -p, (1+T)^{p^n} -1)}$ is finite for every $n$. 
It is well known that, for any finitely generated torsion module $M$ 
over the Iwasawa algebra $\Z_p [[\Gamma ]]$, where $\Gamma$ is isomorphic to $\Z_p$, 
there exists a continuous $p$-adic character $\rho$ of $\Gamma$ such that, 
for every open subgroup $U$ of $\Gamma$, the group of $U$-coinvariants $M(\rho)_U$ 
is finite; here $M(\rho)$ denotes the twist of $M$ by $\rho$. 
This twisting lemma was already applied to study various arithmetic properties of Selmer groups  and Galois cohomologies over a cyclotomic tower by Greenberg and Perrin-Riou.  
We prove a non commutative generalization of this twisting lemma replacing torsion modules over $\Z_p [[ \Gamma ]]$ by certain torsion modules 
over $\Z_p [[G]]$ with more general $p$-adic Lie group $G$. 
In a forthcoming article, this non-commutative twisting lemma will be applied to prove the functional equation of Selmer groups of general 
$p$-adic representations over certain $p$-adic Lie extensions.
\end{abstract}
\maketitle
%\tableofcontents%{hyperref}{links}
\section*{Introduction}
Let us fix an odd prime $p$ throughout the paper. We denote by $\Gamma$ a $p$-Sylow subgroup of $\mathbb{Z}^\times_p$.  
For a compact $p$-adic Lie group $G$ and the ring $\mathcal{O}$ of integers of a finite extension of $\Q_p$,  
we denote the Iwasawa algebra $\mathcal{O}[[G]]$ of $G$ with coefficient in $\mathcal{O}$ by $\La_{\mathcal{O}}(G)$.  
\par 
In this article, we study $\La_{\mathcal{O}}(G)$-modules motivated by \cite{cfksv}. 
More precisely,  we study specializations of certain $\La_{\mathcal{O}}(G)$-modules by two-sided ideals of $\La_{\mathcal{O}}(G)$. 
Recall that the paper \cite{cfksv} establishes a reasonable setting of non-commutative Iwasawa theory in the following situation. 
\begin{enumerate}
\item[(G)] $G$ is a compact $p$-adic Lie group which has a closed normal subgroup $H$ such that $G/H$ is 
isomorphic to $\Gamma$. 
\end{enumerate}
According to the philosophy of \cite{cfksv}, for a reasonable ordinary $p$-adic representation $T$ of a number field $K$ 
and a pair of compact $p$-adic Lie groups $H \subset G$ satisfying the condition (G), the Pontryagin dual $\mathcal{S}^\vee_A$ 
of the Selmer group $\mathcal{S}_A$ of the Galois representation 
$A=T\otimes \Q_p /\Z_p$ over a Galois extension $K_\infty /K$ with $\mathrm{Gal} (K_\infty /K) \cong G$ seems to be a nice object. 
The $\La_{\mathcal{O}}(G)$-module $\mathcal{S}^\vee_A$  divided by the largest $p$-primary torsion subgroup $\mathcal{S}^\vee_A(p)$ is conjectured to belong to  
the category $\mathfrak n_H(G)$ which consists of finitely generated 
$\La_{\mathcal{O}}(G)$-modules $M$ such that $M$ is also finitely generated over $\La_{\mathcal{O}}(H)$. 
From such arithmetic background, we are led to study finitely generated $\La_{\mathcal{O}}(G)$-modules 
for a compact Lie group $G$ with $H \subset G $ satisfying the condition (G).  
\par 
On the other hand, for any open subgroup $U$ of $G$ and for any arithmetic module $\mathcal{S}^\vee_A$ 
as above, the largest $U$-coinvariant quotient $(\mathcal{S}^\vee_A)_U$ is expected to be related to 
the Selmer group of $A$ over a finite extention $K \subset K_\infty$ with $\mathrm{Gal} (K/\mathbb{Q})\cong G/U$.  
As remarked above, we have the following fact (Tw) when $G=\Gamma$ (i.e. when $H=1$) which was used quite usefully 
in the work of Greenberg \cite{gr} and Perrin-Riou \cite{pr}. 
\begin{enumerate}
\item[(Tw)] For any finitely generated torsion $\La_{\mathcal{O}}(\Gamma )$-module $M$, there exists 
a continuous character $\rho : \Gamma \longrightarrow \Z^\times_p$ such that the largest $U$-coinvariant quotient 
$(M\otimes_{\Z_p} \Z_p (\rho ))_U$ of $M\otimes_{\Z_p} \Z_p (\rho )$ is finite for every open subgroup $U$ of 
$\Gamma$ where $\Z_p (\rho )$ is a free $\mathbb{Z}_p$-module of rank one on which $\Gamma$ acts through the 
character $\Gamma \overset{\rho}{\rightarrow} \Z^\times_p$. 
\end{enumerate}
We call such a statement (Tw) twisting lemma. In this commutative situation of $G=\Gamma$, twisting lemma 
is proved in a quite elementary way. For example, we consider the characteristic ideal $\mathrm{char}_{\mathcal{O}[[\Gamma ]]} M$.
If we take a $\rho$ so that the values $\rho (\gamma)^{-1}\zeta_{p^n} -1$ do not coincide with any roots of the distinguished polynomial 
associated to $\mathrm{char}_{\mathcal{O}[[\Gamma ]]} M$ when natural numbers $n$ and $p^n$-th roots of unity $\zeta_{p^n}$ vary, twisting lemma is known to hold.   
\par 
If we have a twisting lemma in non-commutative setting, it seems quite useful for some arithmetic applications for 
non-commutative Iwasawa theory. On the other hand, for a non-commutative $G$, it was not clear what to do 
to prove twisting lemma because we can not talk about ``roots of characteristic polynomials" as we did in commutative setting. 
We finally succeeded in proving twisting lemma. We will state the result below. 
\par 
For a $\La_{\mathcal{O}}(G)$-module $M$ and a continuous character $\rho : \Gamma \longrightarrow \Z^\times_p$, 
we denote by $M (\rho)$ the $\La_{\mathcal{O}}(G)$-module $M \otimes_{\Z_p} \Z_p (\rho )$ with diagonal $G$-action. 
Our main result of this paper is the following theorem.
%%%%%
\begin{maintheorem}\label{mainin} 
Let $G$ be a compact $p$-adic Lie group and let $H$ 
be its closed normal subgroup such that $G/H$ is isomorphic to $\Gamma$.  
Let $M$ be a $\La_{\mathcal{O}}(G)$-module which is finitely generated over 
$\La_{\mathcal{O}}(H)$. 
\par 
Then there exists a continuous character $\rho: \Gamma \lra \Z_p^\times$ such that the largest $U$-coinvariant quotient $M(\rho)_U$ of  $M(\rho)$
is finite for every open normal subgroup $U$ of $G$. 
\end{maintheorem}
We give some  examples of a pair $H\subset G$ satisfying the condition (G) and a 
$\La_{\mathcal{O}} (G)$-module $M$ which should appear in arithmetic applications. 
%%%%%%%%%%%%%%%%%%
\begin{example}
\begin{enumerate} 
\item 
Let us choose a prime $p \geq 5$. Let $E$ be a non-CM  elliptic curve over $\Q$ with good ordinary reduction at  $p$. Take $K = \Q(E[p]) $ and 
 set $K_\infty = \Q( {\underset{n \geq1}\cup} E[p^n])$.  Then by a well known result of Serre, $\mathrm{Gal}(K_\infty/K)$ is an open subgroup of $GL_2(\Z_p)$. 
 By Weil pairing, the cyclotomic $\Z_p$ extension $K_\cyc$ of $K$ is contained in $K_\infty$. 
We denote $\mathrm{Gal}(K_\infty/K)$, $\mathrm{Gal}(K_\infty /K_\cyc )$ and $\mathrm{Gal}(K_\cyc/K)$ by $G$, $H$ and $\Gamma$ respectively. The pair $H\subset G$ satisfies the condition \rm{(G)}. \\
Let us consider the Pontryagin dual $\mathcal{S}^\vee_A$ of the Selmer group $\mathcal{S}_A$ of the Galois representation 
$A=T_pE\otimes \Q_p /\Z_p$ over the Galois extension $K_\infty /K$ discussed above. 
We take $M$ to be the module $\mathcal{S}^\vee_A /\mathcal{S}^\vee_A(p)$.  It is conjectured that the module $M=\mathcal{S}^\vee_A /\mathcal{S}^\vee_A(p)$ is in the category $\mathfrak n_H(G)$ $($cf. \cite[Conjecture 5.1]{cfksv}$)$ and there are examples  where this conjecture is satisfied (cf. loc. cit.). 
\item Let us choose a $p$-th power free integer $m \geq 2$. Put $ K =\Q(\mu_p)$, $K_\cyc = \Q(\mu_{p^\infty})$ and 
 $K_\infty = \overset{\infty} {\underset{n=1}\cup} ~K_\cyc ( m^{1/p^n})$.  Such an extension $K_\infty/K $ is called a false-Tate curve extension.
 We denote $\mathrm{Gal}(K_\infty/K)$, $\mathrm{Gal}(K_\infty /K_\cyc )$ and $\mathrm{Gal}(K_\cyc/K)$ by $G$, $H$ and $\Gamma$ respectively. 
 Note that we have $G \cong \Z_p \rtimes \Z_p$, $H \cong \Z_p$ and $\Gamma \cong \Z_p$. Again the pair $H\subset G$ satisfies the condition (G). 
\par 
Let us consider the Pontryagin dual $\mathcal{S}^\vee_A$ of the Selmer group $\mathcal{S}_A$ of the Galois representation 
$A=T\otimes \Q_p /\Z_p$ over a Galois extension $K_\infty /K$ discussed above.  We take $M$ to be the module $\mathcal{S}^\vee_A /\mathcal{S}^\vee_A(p)$. Under certain 
assumptions on $ A$, it is expected that $\mathcal{S}^\vee_A /\mathcal{S}^\vee_A(p)$ will be in $\mathfrak n_H(G)$. We refer to \cite{hv} for some examples of 
$\mathcal{S}^\vee_A /\mathcal{S}^\vee_A(p)$ which are in $\mathfrak n_H(G)$.
\item Let $K$ be an imaginary quadratic field in which a rational prime $p \neq 2$ splits. Let $K_\infty$ be the unique 
$\Z_p^{\oplus 2} $-extension of $K$.  Let $G = \mathrm{Gal} (K_\infty /K) $ and  $H = \mathrm{Gal} (K_\infty /K_{\mathrm{cyc}}) $. Once again the pair $H \subset G$ satisfies the condition \rm{(G)}. 
\par 
For the Pontrjagin dual $\mathcal{S}^\vee_A$ of the Selmer group $\mathcal{S}_A$ of the Galois representation 
$A=T\otimes \Q_p /\Z_p$ over a Galois extension $K_\infty /\mathbb{Q}$ in this commutative two-variable situation, similar phenomena as above are expected and 
we take $M$ to be the module $\mathcal{S}^\vee_A /\mathcal{S}^\vee_A(p)$. \end{enumerate}
\end{example}
In a forthcoming joint work of two of us \cite{jo}, Main Theorem above will be applied to establish functional equation of Selmer groups for general $p$-adic representations over general non-commutative $p$-adic Lie extension. 
This is a partial motivation of our present work for two of us. 
Note that the third author proved the functional equation of Selmer groups for elliptic curves over false-Tate curve extension (cf. \cite{za1}) and for 
non CM elliptic curves in $GL_2$-extension (cf. \cite{za2}). 
But the main method of the papers \cite{za1}, \cite{za2} are not based on the twisting lemma.
\ \\ \\ 
{\bf Notation} 
Unless otherwise specified, all modules over $\La_{\mathcal{O}}( G)$ are considered as left modules. 
Throughout the paper we fix a topological generator $\gamma$ of $\Gamma$. 
\\ \\ 
{\bf Acknowledgement} 
A part of the paper was done when S. Jha  visited Osaka University. He thanks Osaka  
University for their hospitality. 
The paper was finalized when the T. Ochiai stayed at Indian Institute of Technology Kanpur (IITK) on September 2015. He thanks  IITK for their hospitality. 
%Main Theorem of the original version of this paper was the same as Preliminary Theorem. 
%The result was improved by the idea of the third author which is seen in Key Lemma. 
We are grateful to Prof. Coates for encouragement and invaluable suggestion regarding improving an earlier draft of the paper. 
%%%%%%%%%%%%%%%%%%%%%%%%%%%%
\section{Preliminary Theorem}\label{s3}
In this section, we formulate and prove  Preliminary Theorem below, which gives 
the same conclusion  as Main Theorem under stronger assumptions (i.e. the hypothesis (H) and 
non-existence of non-trivial element of order $p$ in $G$). 
In the next section, our Main Theorem is the deduced from Preliminary theorem 
and Key Lemma which is given in the next section. 
%%%%%
\begin{preliminarytheorem}\label{prein} 
Let $G$ be a compact $p$-adic Lie group without any element of order $p$ and let $H$ 
be its closed normal subgroup such that $G/H$ is isomorphic to $\Gamma$.  
Let $M$ be a finitely generated torsion $\La_{\mathcal{O}}(G)$-module satisfying the following condition: 
\begin{enumerate} 
\item[(H)] 
We have a $\La_{\mathcal{O}}(H)$-linear homomorphism $M {\longrightarrow} \Z_p [[H]]^{\oplus d} $ 
which induces an isomorphism $M \otimes_{\mathbb{Z}_p} \mathbb{Q}_p \overset{\sim}{\longrightarrow} 
(\Z_p [[H]] \otimes_{\mathbb{Z}_p} \mathbb{Q}_p )^{\oplus d} $ after taking $\otimes_{\mathbb{Z}_p} \mathbb{Q}_p$. 
\end{enumerate}
Then there exists a continuous character $\rho: \Gamma \lra \Z_p^\times$ such that the largest $U$-coinvariant quotient $M(\rho)_U$ of  $M(\rho)$
is finite for every open normal subgroup $U$ of $G$. 
\end{preliminarytheorem}
%%%%%%%%%%%%%%%%%%%%%%%
%\begin{rem}
%In Example stated in the introduction, we gave some examples of a pair $H\subset G$ satisfying the condition (G) and a $\La_{\mathcal{O}} (G)$-module $M$ which should appear in arithmetic applications. Sometimes, we expect that the condition (H) also holds for those examples. For example, suppose that we are in the setting given in Example (1) in the introduction. It is conjectured that the module $M=\mathcal{S}^\vee_A /\mathcal{S}^\vee_A(p)$ is in the category $\mathfrak n_H(G)$ (c.f. \cite[Conjecture 5.1]{cfksv}).  Also, under mild condition, it is proved that $\mathcal{S}^\vee_A $ has no $\Lambda(G)$ pseudonull submodules \cite[Theorem 2.6, Theorem 3.1]{hv}. If $M$ is a finitely generated $\Lambda(G)$-module in $\mathfrak n_H(G)$ which has no $\Lambda(G)$ pseudonull submodules, it is easy to check the condition (H) since $\mathcal{O} [[H]]$ is a commutative algebra of Krull dimension $2$ in this false-Tate curve setting.
%\end{rem}
Before going into the proof of Preliminary Theorem, we collect some basic results in non-commutative Iwasawa theory which are relevant  for the article.
%%%%%%
\begin{lemma}\label{lemma;l1}
Let $H\subset G$ be a pair satisfying the condition (G)  
and let $M$ be a finitely generated $\La_{\mathcal{O}} (G)$-module which satisfies the condition (H). 
Then there exists a matrix $A \in M_d (\Z_p [[H]] \otimes_{\Z_p} \mathbb{Q}_p )$ such that   
$M \otimes_{\Z_p} \mathbb{Q}_p $ is isomorphic to 
\begin{equation}\label{equation;e1}
( \Z_p [[G]] \otimes_{\Z_p} \mathbb{Q}_p )^{\oplus d} \big/ ( \Z_p [[G]] \otimes_{\Z_p} \mathbb{Q}_p )^{\oplus d} 
(\widetilde{\gamma} \text{\boldmath$1$}_d -A )
\end{equation}
where $\gamma$ is a topological generator of $\Gamma$ and $\widetilde{\gamma} \in G$ is a fixed lift of $\gamma$ 
and elements in $( \Z_p [[G]] \otimes_{\Z_p} \mathbb{Q}_p )^{\oplus d} $  are regarded as row vectors. 
\end{lemma}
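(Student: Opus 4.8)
The plan is to work entirely over $\Q_p$-coefficients, so set $\Lambda_H := \Z_p[[H]]\otimes_{\Z_p}\Q_p$ and $\Lambda_G := \Z_p[[G]]\otimes_{\Z_p}\Q_p$, and write $N := M\otimes_{\Z_p}\Q_p$. First I would record the structural fact that, because $G/H\cong\Gamma$ and $\widetilde{\gamma}$ is a fixed lift of a topological generator $\gamma$, the ring $\Lambda_G$ is a (twisted) ``skew power series'' extension of $\Lambda_H$ in the single variable $\widetilde{\gamma}$: every element of $\Lambda_G$ is uniquely a convergent sum $\sum_{i\ge 0}\lambda_i\widetilde{\gamma}^{\,i}$ with $\lambda_i\in\Lambda_H$, and conjugation by $\widetilde{\gamma}$ induces a (continuous) automorphism of $\Lambda_H$. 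In particular $\Lambda_G$ is free as a left (and right) $\Lambda_H$-module on the topological basis $\{\widetilde{\gamma}^{\,i}\}_{i\ge 0}$. This is the non-commutative analogue of $\Z_p[[\Gamma]]\cong\Z_p[[T]]$ and is what lets ``$\widetilde{\gamma}\mathbf{1}_d - A$'' make sense as a presentation matrix.

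Next I would use hypothesis (H): by assumption there is a $\Lambda_{\mathcal{O}}(H)$-linear map $M\to\Z_p[[H]]^{\oplus d}$ that becomes an isomorphism $N\xrightarrow{\sim}\Lambda_H^{\oplus d}$ after inverting $p$. So I may simply identify $N=\Lambda_H^{\oplus d}$ as a left $\Lambda_H$-module, and the only extra datum is the left action of $\widetilde{\gamma}$ on $N$, which by $H$-semilinearity is determined by a $\Q_p$-linear (in fact $\sigma$-semilinear, where $\sigma$ is conjugation by $\widetilde{\gamma}$ on $\Lambda_H$) automorphism of $\Lambda_H^{\oplus d}$. Writing this action on the chosen basis gives a matrix $A\in M_d(\Lambda_H)$ with $\widetilde{\gamma}\cdot v = (\sigma(v))A$ for a row vector $v$ (invertibility of $A$ follows since $\widetilde{\gamma}$ acts invertibly). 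Then I would define the $\Lambda_G$-linear surjection
\begin{equation*}
\varphi:\Lambda_G^{\oplus d}\longrightarrow N,\qquad e_j\mapsto j\text{-th standard basis vector of }\Lambda_H^{\oplus d},
\end{equation*}
which is well-defined and surjective because $N$ is generated over $\Lambda_G$ by these $d$ elements (it is already generated over $\Lambda_H$). The point is then to show $\ker\varphi = \Lambda_G^{\oplus d}(\widetilde{\gamma}\mathbf{1}_d - A)$. The inclusion $\supseteq$ is immediate from the relation defining $A$. For $\subseteq$: given $x\in\ker\varphi$, expand each coordinate of $x$ in the basis $\{\widetilde{\gamma}^{\,i}\}$ and use the relation $\widetilde{\gamma}\mathbf{1}_d\equiv A$ modulo $\Lambda_G^{\oplus d}(\widetilde{\gamma}\mathbf{1}_d-A)$ to push all powers of $\widetilde{\gamma}$ down, reducing $x$ modulo the submodule $\Lambda_G^{\oplus d}(\widetilde{\gamma}\mathbf{1}_d - A)$ to an element of $\Lambda_H^{\oplus d}$; since that element still lies in $\ker\varphi$ and $\varphi$ restricted to $\Lambda_H^{\oplus d}$ is the identity, it is zero. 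Hence \eqref{equation;e1} holds.

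The main obstacle, and the place where care is genuinely needed, is the convergence/termination issue in the reduction step: pushing down one power of $\widetilde{\gamma}$ replaces a tail $\widetilde{\gamma}^{\,i}$ by lower-order terms with $\Lambda_H$-coefficients built from $A$ and $\sigma$, and one must check this process converges $p$-adically (in the compact topology) rather than merely terminating on polynomials. This is exactly the ``skew division algorithm'' for skew power series rings, and the cleanest way to handle it is to first prove the statement with $\Lambda_G$ replaced by its ``polynomial'' subring $\Lambda_H[\widetilde{\gamma}]$ — where the reduction is a finite Euclidean-type algorithm since $A$ has entries in $\Lambda_H$ and $\widetilde{\gamma}\mathbf{1}_d-A$ is ``monic'' of degree one in $\widetilde{\gamma}$ — and then pass to the completion, invoking completeness of $\Lambda_H^{\oplus d}$ and the fact that $\Lambda_G^{\oplus d}(\widetilde{\gamma}\mathbf{1}_d - A)$ is closed (it is the image of the compact module $\Lambda_G^{\oplus d}$ under a continuous map, hence closed). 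Everything else is bookkeeping with the free $\Lambda_H$-module structure of $\Lambda_G$.
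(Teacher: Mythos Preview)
Your approach is correct and is essentially the same as the paper's: define $A$ by the action of $\widetilde{\gamma}$ on a chosen free $\Lambda_H$-basis of $M\otimes_{\Z_p}\Q_p$, and then identify the presented module \eqref{equation;e1} with $M\otimes_{\Z_p}\Q_p$ as free $\Lambda_H$-modules of rank $d$ carrying the same $\widetilde{\gamma}$-action. The paper's own proof is in fact considerably terser than yours---it simply asserts the $\Lambda_H$-linear isomorphism ``by construction'' and does not spell out the division-algorithm verification of $\ker\varphi$ that you take care to supply.
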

\begin{proof}
Let us take a basis $\bold{v}_1 ,\ldots ,\bold{v}_d $ of free  $\Z_p [[H]] \otimes_{\Z_p} \mathbb{Q}_p$-module $M \otimes_{\Z_p} \mathbb{Q}_p$. 
Through the isomorphism $M \otimes_{\mathbb{Z}_p} \mathbb{Q}_p \overset{\sim}{\longrightarrow} 
(\Z_p [[H]] \otimes_{\mathbb{Z}_p} \mathbb{Q}_p )^{\oplus d} $ fixed by the condition (H), $\widetilde{\gamma} $ acts on  $M$. 
Thus we define a matrix $A = (a_{ij})_{1\leq i,j \leq d} \in M_d (\Z_p [[H]] \otimes_{\Z_p} \mathbb{Q}_p )$ by 
$$
\widetilde{\gamma} \cdot \bold{v}_i = \sum_{1\leq j \leq d} a_{ji}  \bold{v}_j . 
$$ 
We denote the module presented as in \eqref{equation;e1} by $N_A$. By construction, we have a $\Z_p [[H]] \otimes_{\Z_p} \mathbb{Q}_p $-linear isomorphism   
$( \Z_p [[H]] \otimes_{\Z_p} \mathbb{Q}_p )^{\oplus d} \overset{\sim}{\longrightarrow} N_A$ 
on which $\widetilde{\gamma} $ acts in the same manner as the action of $\widetilde{\gamma} $ on $M  \otimes_{\Z_p} \mathbb{Q}_p$.
\end{proof}
We denote by $\mathcal{U}$ the set of all open normal subgroups $U$ of $G$. We remark that the set $\mathcal{U}$ is a countable set 
since $G$ is profinite and has a countable base at identity.  

%%%%%%
\begin{lemma}\label{lemma;l2}
For any $U \in \mathcal{U}$, $\Z_p [G /U ] \otimes_{\Z_p } \overline{\mathbb{Q}}_p$ is isomprphic to 
a finite number of products of matrix algebras $\prod^{k (U)}_{i=1} M_{r_i} (\overline{\mathbb{Q}}_p)$. 
\end{lemma}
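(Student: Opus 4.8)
The plan is to recognize this as the classical Wedderburn structure theorem for the group algebra of a finite group over an algebraically closed field of characteristic zero. First I would note that since $U$ is open in the profinite group $G$, the quotient $G/U$ is a finite group, and the flat base change identifies $\Z_p[G/U] \otimes_{\Z_p} \overline{\mathbb{Q}}_p$ with the group algebra $\overline{\mathbb{Q}}_p[G/U]$, a finite-dimensional $\overline{\mathbb{Q}}_p$-algebra.

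Next I would invoke Maschke's theorem: because $\overline{\mathbb{Q}}_p$ has characteristic $0$, the order $|G/U|$ is invertible in $\overline{\mathbb{Q}}_p$, so $\overline{\mathbb{Q}}_p[G/U]$ is a semisimple $\overline{\mathbb{Q}}_p$-algebra. By the Artin--Wedderburn theorem it is therefore isomorphic to a finite product $\prod_{i=1}^{k(U)} M_{r_i}(D_i)$ of matrix algebras over division rings $D_i$, each finite-dimensional over $\overline{\mathbb{Q}}_p$; concretely $D_i = \mathrm{End}_{\overline{\mathbb{Q}}_p[G/U]}(S_i)$ for the finitely many simple $\overline{\mathbb{Q}}_p[G/U]$-modules $S_i$, and $r_i = \dim_{D_i} S_i$.

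Finally I would use that $\overline{\mathbb{Q}}_p$ is algebraically closed: by Schur's lemma each $D_i$ is a division algebra finite-dimensional over $\overline{\mathbb{Q}}_p$, and any such algebra equals $\overline{\mathbb{Q}}_p$, since any element of it generates a finite (hence trivial) field extension of $\overline{\mathbb{Q}}_p$ inside $D_i$, forcing $D_i$ to be commutative and then equal to $\overline{\mathbb{Q}}_p$. This yields $\overline{\mathbb{Q}}_p[G/U] \cong \prod_{i=1}^{k(U)} M_{r_i}(\overline{\mathbb{Q}}_p)$, with $k(U)$ the number of irreducible $\overline{\mathbb{Q}}_p$-representations of $G/U$ (equivalently the number of conjugacy classes of $G/U$) and $r_i$ their dimensions.

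There is essentially no serious obstacle here --- the statement is a textbook consequence of Maschke's and Wedderburn's theorems --- so the only point requiring care is to record explicitly the two hypotheses that make the argument work: characteristic zero of the coefficient ring (needed for semisimplicity via Maschke), and algebraic closedness of $\overline{\mathbb{Q}}_p$ (needed to conclude that the division algebras appearing are trivial, so that one obtains genuine matrix algebras over $\overline{\mathbb{Q}}_p$ itself rather than over nontrivial division rings).
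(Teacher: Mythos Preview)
Your proof is correct. Both you and the paper invoke Maschke's theorem and Artin--Wedderburn, but the routes diverge slightly: the paper first applies Wedderburn over $\mathbb{Q}_p$ to write $\mathbb{Q}_p[G/U] \cong \prod_i M_{s_i}(D_i)$ with $D_i$ a finite-dimensional division algebra over $\mathbb{Q}_p$, and then base-changes each factor to $\overline{\mathbb{Q}}_p$ using the structure theory of central simple algebras (each $D_i$ with center $K_i$ splits as $D_i \otimes_{K_i} \overline{\mathbb{Q}}_p \cong M_{t_i}(\overline{\mathbb{Q}}_p)$, and $K_i \otimes_{\mathbb{Q}_p} \overline{\mathbb{Q}}_p$ contributes $[K_i:\mathbb{Q}_p]$ copies). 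You instead work directly over $\overline{\mathbb{Q}}_p$ and use algebraic closedness to kill the division rings in one stroke via Schur's lemma. Your argument is shorter and entirely sufficient for the lemma as stated; the paper's detour through $\mathbb{Q}_p$ gives a little extra bookkeeping (how the $\overline{\mathbb{Q}}_p$-factors are grouped according to the $\mathbb{Q}_p$-factors), but this is not used later.
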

\begin{proof}
First of all, the algebra $\Z_p [G /U ] \otimes_{\Z_p }{\mathbb{Q}}_p \cong \mathbb{Q}_p [G /U ]$ is a 
semisimple algebra over $\mathbb{Q}_p$ since $G/U$ is a finite group and $\mathbb{Q}_p$ is of characteristic $0$. 
We have an isomorphism 
$$
\Q_p [G /U ] \cong \prod^{l_n}_{i=1} M_{s_i} (D_i )  
$$ 
where $D_i$ is a finite dimensional division algebra over $\mathbb{Q}_p$. 
For each $i$, the center $K_i$ of $D_i$ is a finite extension of $\mathbb{Q}_p$. It is well-known that 
$\mathrm{dim}_{K_i } D_i$ is a square of some natural number $t_i$ and  $D_i   \otimes_{K_i } \overline{\mathbb{Q}}_p$ 
is isomorphic to $M_{t_i} (\overline{\mathbb{Q}}_p)$. Thus $M_{s_i} (D_i )  \otimes \overline{\mathbb{Q}}_p$ is isomorphic to 
$[K_i : \mathbb{Q}_p]$ copies of  $M_{s_i + t_i} (\overline{\mathbb{Q}}_p) $.  The lemma follows immediately from this. 
\end{proof}
We will prove Preliminary Theorem using the results prepared above. 
\begin{proof}[Proof of Preliminary Theorem]
First, we remark that,  for an open normal subgroup $U$ of $G$, we have 
\begin{equation}\label{equation;e2_0}
\text{$M(\rho)_U$ is finite if and only if 
$M(\rho)_U \otimes_{\Z_p} {\mathbb{Q}}_p =0$.} 
\end{equation}
Since taking the base extension $\otimes_{\Z_p} {\mathbb{Q}}_p$ and 
taking the largest $U$-coinvariant quotient commute with each other, by Lemma \ref{lemma;l1}, we have  
\begin{equation}\label{equation:e2_1}
M(\rho)_U \otimes_{\Z_p} {\mathbb{Q}}_p \cong ( \Z_p [G/U] \otimes_{\Z_p} \mathbb{Q}_p )^{\oplus d} \big/ ( \Z_p [G/U] \otimes_{\Z_p} \mathbb{Q}_p )^{\oplus d} 
(\widetilde{\gamma}^{\oplus d} _U  -A_U (\rho ) )
\end{equation}
where we denote the projection of $\widetilde{\gamma}  \in G$ to $G/U$ by $\widetilde{\gamma}_U $ .  
Here, the matrix $A_U (\rho) \in M_d ( \Z_p [G/U]\otimes_{\Z_p} {\mathbb{Q}}_p)$  is defined to be the image of $\rho (\gamma)^{-1} A\in M_d ( \Z_p [[H]] \otimes_{\Z_p} {\mathbb{Q}}_p) $ via the composite map 
$M_d ( \Z_p [[H]]\otimes_{\Z_p} {\mathbb{Q}}_p) \longrightarrow M_d ( \Z_p [[G]]\otimes_{\Z_p} {\mathbb{Q}}_p) \longrightarrow 
M_d ( \Z_p [G/U]\otimes_{\Z_p} {\mathbb{Q}}_p) $. Taking the base extension $\otimes_{\mathbb{Q}_p} \overline{\mathbb{Q}}_p $
of the isomorphism \eqref{equation:e2_1}, we have a $\overline{\mathbb{Q}}_p$-linear isomorphism by Lemma \ref{lemma;l2}: 
\begin{equation}\label{equation:e2_2}
M(\rho)_U \otimes_{\Z_p} \overline{\mathbb{Q}}_p  \cong \prod^{k (U)}_{i=1} M_{r_i} (\overline{\mathbb{Q}}_p)^{\oplus d}
\Big/
M_{r_i} (\overline{\mathbb{Q}}_p)^{\oplus d} (\gamma_{U,i}^{\oplus d}  - A_{U,i} (\rho ) )
\end{equation}
where $\gamma_{U,i}  \in \mathrm{Aut}_{\overline{\mathbb{Q}}_p} (M_{r_i} (\overline{\mathbb{Q}}_p))$ is defined as follows. 
We consider the base extension to $\overline{\mathbb{Q}}_p$ of 
$\widetilde{\gamma}_U  \in  \mathrm{Aut}_{\Z_p [G/U] \otimes_{\Z_p} \mathbb{Q}_p} (\Z_p [G/U] \otimes_{\Z_p} \mathbb{Q}_p ) 
\subset \mathrm{Aut}_{\mathbb{Q}_p} (\Z_p [G/U] \otimes_{\Z_p} \mathbb{Q}_p ) $. 
This is an element of $\mathrm{Aut}_{\overline{\mathbb{Q}}_p} (\prod^{k (U)}_{i=1} M_{r_i} (\overline{\mathbb{Q}}_p))$. 
Then, we denote the projection of this element to the $i$-th component by $\gamma_{U,i}$.
\par 
The element $A_{U,i} (\rho ) \in \mathrm{End}_{\overline{\mathbb{Q}}_p} (M_{r_i} (\overline{\mathbb{Q}}_p)^{\oplus d})$ 
is defined as follows. We consider the base extension to $\overline{\mathbb{Q}}_p$ of 
$
A_U (\rho) \in M_d ( \Z_p [G/U]\otimes_{\Z_p} {\mathbb{Q}}_p)
\subset \mathrm{End}_{ {\mathbb{Q}}_p}  \big( (  \Z_p [G/U]\otimes_{\Z_p} {\mathbb{Q}}_p)^{\oplus d} \big) .
$ 
This is an element of $\mathrm{End}_{\overline{\mathbb{Q}}_p} (\prod^{k (U)}_{i=1} M_{r_i} (\overline{\mathbb{Q}}_p)^{\oplus d})$. 
 Then, we denote the projection of this element to the $i$-th component by $A_{U,i} (\rho)$.
 \par 
Now, we denote by  $A_{U,i} $ the element $A_{U,i} (\text{\boldmath$1$})$. We remark that $A_{U,i} (\rho)$ is equal to 
$\rho (\gamma)^{-1}A_{U,i}$ for any continuous character $\rho: \Gamma \longrightarrow \Z^\times_p$. 
we define $\mathrm{EV}_{U,i}$ to be the set of roots of the characteristic polynomial 
$$
P_{U,i} (T) := \mathrm{det} (\gamma_{U,i}^{\oplus d}  - A_{U,i} T ). 
$$
Since $\gamma_{U,i}^{\oplus d}$ is automorphism, the polynomial $P_{U,i} (T)$ is not zero.  Hence $\mathrm{EV}_{U,i}$ is a finite set. 
We denote the union of $\mathrm{EV}_{U,i}$ for $1\leq i \leq k(U)$ by $\mathrm{EV}_{U}$, which is again a finite set. 
If $\rho (\gamma)^{-1}$ is not contained in $\mathrm{EV}_{U} \cap \Z^\times_p$, the module in \eqref{equation:e2_2} is zero 
and hence the module in \eqref{equation:e2_1} is zero. 
Now, we denote by $\mathrm{EV}_{M}$ the union  of $\mathrm{EV}_{U}\cap \Z^\times_p$ when $U \in \mathcal{U}$ moves. 
Since $\mathcal{U}$ is countable set, $\mathrm{EV}_{M}$ is a countable set. Thus $\Z^\times_p \setminus \mathrm{EV}_{M}$ 
is nonempty since $\Z^\times_p $ is uncountable. By choosing $\rho (\gamma)^{-1} \in \Z^\times_p \setminus \mathrm{EV}_{M}$, 
we complete the proof. 
\end{proof}
%%%%%%%%%%%%%%%%%%%%%%%%
%%%%%%%%%%%%%%%%%%%%%%%%
\section{Proof of the main theorem}\label{s4}
In this section, we prove Main Theorem. 
First we prepare Key Lemma as follows: 
%%%%%%
\begin{keylemma}
Let $G$ be a compact $p$-adic Lie group without any element of order $p$ and let $H$ 
be its closed subgroup such that $G/H$ is isomorphic to $\Gamma$.  
Let $M$ be $\La_{\mathcal{O}}(G)$-module which is finitely generated over 
$\La_{\mathcal{O}}(H)$. 
Then, there exists an open subgroup $G_0 \subset G$ containing $H$, 
a $\La_{\mathcal{O}}(G_0)$-module $N$ which 
is a free $\La_{\mathcal{O}}(H)$-module of finite rank, and 
a surjective $\La_{\mathcal{O}}(G_0)$-linear homomorphism $N \twoheadrightarrow M$.  
\end{keylemma}
\begin{proof}
We denote by $I$ the Jacobson radical of 
$\La_{\mathcal{O}}(H)$. Note that I is a two sided ideal of 
$\La_{\mathcal{O}}(H)$ such that we have $\La_{\mathcal{O}}(H)/I 
\cong \mathbb{F}_q$ where $\mathbb{F}_q$ is 
the residue field of $\mathcal{O}$. We also have 
$\La_{\mathcal{O}}(G)/I \cong \mathbb{F}_q [[\Gamma ]]$ by definition. 
\par 
Let us take a system of generators $m_1 ,\ldots ,m_d$ of 
$M$ as a $\La_{\mathcal{O}}(H)$-module. 
Note that $M$ is equipped with a topology obtained by a natural 
$\La_{\mathcal{O}}(H)$-module structure. 
The set $\{ I^n M\}_{n\in \mathbb{N}}$ forms a system of open neighborhoods 
of $M$. 
\par 
Choose a topological generator $\gamma$ of $\Gamma$ and 
take a lift $\widetilde{\gamma} \in G$ of $\gamma$. 
By continuity of the action of $G$ on $M$, the following two conditions  
hold true simultaneously for a sufficiently large integer $n$. 
\begin{enumerate} 
\item[(i)] 
We have $({\widetilde{\gamma}}^{p^n} -1)m_i \in IM$ for any $i$ with 
$1\leq i \leq d$. 
\item[(ii)]
The conjugate action of ${\widetilde{\gamma}}^{p^n}$ on $I/I^2$ is 
trivial. 
\end{enumerate}
We will choose and fix a sufficiently natural number $n$ satisfying the 
conditions (i) and (ii). Then we define $G_0$ to be the preimage of 
$\Gamma^{p^n}$ by the surjection $G \twoheadrightarrow \Gamma$.  
By definition, $G_0$ an open subgroup of $G$ which contain $H$.
\par 
Let us consider the set 
$\{ a_{ij } \in I\}_{1\leq i,j \leq d}$ such that 
we have $({\widetilde{\gamma}}^{p^n} -1)m_j = \sum^d_{i=1} a_{ij} m_i$.  
We consider $F$ (resp. $F'$) which is a 
free $\La_{\mathcal{O}}(G_0)$-module of rank $d$ equipped with 
a system of generators $f_1 ,\ldots ,f_d$ (resp. $f'_1 ,\ldots ,f'_d$). 
We consider a $\La_{\mathcal{O}}(G_0)$-linear homomorphism $\varphi : F' \longrightarrow F$ as follows: 
$$
\varphi : \ F' \longrightarrow F ,\ \  f'_j \mapsto ({\widetilde{\gamma}}^{p^n} -1)f_j 
- \sum^d_{i=1} a_{ij} f_i .
$$ 
We define a $\La_{\mathcal{O}}(G_0)$-module $N$ to be the cokernel of the 
map $\varphi $ above. 
We have the following claim: 
\begin{claim}
For each $i$ with $1\leq i \leq d$, 
we denote the image of $f_i$ by $\overline{f}_i$. 
Then, the $\La_{\mathcal{O}}(G_0)$-module $N$ 
is a free $\La_{\mathcal{O}}(H)$-module of finite rank $d$ 
with a system of generators $\overline{f}_1 ,\ldots ,\overline{f}_d$.
\end{claim}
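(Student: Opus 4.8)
The plan is to show that $N$ is generated by $\overline{f}_1,\ldots,\overline{f}_d$ over $\La_{\mathcal{O}}(H)$ and that these generators are linearly independent over $\La_{\mathcal{O}}(H)$, so that $N$ is free of rank $d$. For the generation statement, I would argue as follows. By construction $N = F/\varphi(F')$, and $F$ is free of rank $d$ over $\La_{\mathcal{O}}(G_0)$ with generators $f_1,\ldots,f_d$. The relations imposed say precisely that $(\widetilde{\gamma}^{p^n}-1)\overline{f}_j = \sum_i a_{ij}\overline{f}_i$ in $N$, with $a_{ij}\in I$. Since $G_0$ is the preimage of $\Gamma^{p^n}$ and $\Gamma^{p^n}$ is topologically generated by the image of $\widetilde{\gamma}^{p^n}$, the algebra $\La_{\mathcal{O}}(G_0)$ is topologically generated over $\La_{\mathcal{O}}(H)$ by $\widetilde{\gamma}^{p^n}$; more precisely $\La_{\mathcal{O}}(G_0) = \La_{\mathcal{O}}(H)[[\widetilde{\gamma}^{p^n}-1]]$ with the appropriate twisted multiplication. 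Hence any element of $F$ can be written, modulo $\varphi(F')$, by repeatedly using the relation to push powers of $(\widetilde{\gamma}^{p^n}-1)$ past the generators, lowering into $\sum_i \La_{\mathcal{O}}(H)\overline{f}_i$. One must check this rewriting converges $I$-adically: each application of the relation trades a factor $(\widetilde{\gamma}^{p^n}-1)$ for coefficients in $I$, and condition (ii) guarantees that conjugating $I$-adic filtration steps by $\widetilde{\gamma}^{p^n}$ respects the filtration, so the process converges and expresses $N$ as $\sum_{i=1}^d \La_{\mathcal{O}}(H)\overline{f}_i$.

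For linear independence, I would compare $N$ with a universal object. Consider the free $\La_{\mathcal{O}}(H)$-module $L$ of rank $d$ with basis $e_1,\ldots,e_d$, and equip it with a $\widetilde{\gamma}^{p^n}$-semilinear endomorphism defined by $\widetilde{\gamma}^{p^n}\cdot e_j = e_j + \sum_i a_{ij} e_i$; one checks (using (ii), which ensures the $a_{ij}\in I$ behave compatibly under the $\widetilde{\gamma}^{p^n}$-conjugation needed for associativity) that this makes $L$ a $\La_{\mathcal{O}}(G_0)$-module. The map $f_i\mapsto e_i$ then factors through $\varphi$, giving a surjection $N\twoheadrightarrow L$ of $\La_{\mathcal{O}}(G_0)$-modules; combined with the surjection $\La_{\mathcal{O}}(H)^{\oplus d}\twoheadrightarrow N$ from the previous paragraph and the fact that the composite $\La_{\mathcal{O}}(H)^{\oplus d}\to L$ is the identity on the free module, we conclude that $N\cong L$ and the $\overline{f}_i$ form a free $\La_{\mathcal{O}}(H)$-basis.

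The main obstacle I anticipate is the convergence and well-definedness bookkeeping in the first paragraph: one must verify carefully that the $I$-adic rewriting procedure genuinely terminates in the completed module and does not produce spurious relations, and that conditions (i) and (ii) (chosen precisely for this purpose) suffice. In particular, (i) ensures that the matrix $(a_{ij})$ has entries in $I$ so that the leading term of the relation is $(\widetilde{\gamma}^{p^n}-1)\overline{f}_j$, while (ii) is what makes the $\widetilde{\gamma}^{p^n}$-action compatible with the $I$-adic filtration on the nose, so that a topological Nakayama-type argument applies: $N/IN$ is visibly free of rank $d$ over $\La_{\mathcal{O}}(H)/I = \mathbb{F}_q$ since mod $I$ the relation becomes $(\widetilde{\gamma}^{p^n}-1)\overline{f}_j = 0$, and then one lifts this freeness up the filtration. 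Getting the associativity of the twisted $\La_{\mathcal{O}}(G_0)$-action on $L$ exactly right — i.e. that $(\widetilde{\gamma}^{p^n}-1)$ and the $\La_{\mathcal{O}}(H)$-coefficients interact correctly via the group conjugation — is the other delicate point, and this is exactly where hypothesis (ii) enters.
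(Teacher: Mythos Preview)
Your generation argument via topological Nakayama (which you arrive at in your final paragraph) is exactly what the paper does: one computes $N/IN$ as the cokernel of multiplication by $\widetilde{\gamma}^{p^n}-1$ on $\mathbb{F}_q[[\Gamma^{p^n}]]^{\oplus d}$, which is $\mathbb{F}_q^{\oplus d}$, and then lifts. For freeness, however, your route diverges from the paper's, and the construction of the ``model'' $L$ has a genuine gap. You want to equip the free $\La_{\mathcal{O}}(H)$-module $L$ with a $\La_{\mathcal{O}}(G_0)$-structure by decreeing $\widetilde{\gamma}^{p^n}\cdot e_j = e_j + \sum_i a_{ij}e_i$ and extending $\sigma$-semilinearly. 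But a single $\sigma$-semilinear automorphism $\Phi$ of $L$ does not automatically integrate to a $\La_{\mathcal{O}}(G_0)$-action: one needs the extension $1\to H\to G_0\to\Gamma^{p^n}\to 1$ to split along the chosen lift $\widetilde{\gamma}^{p^n}$, and one needs the resulting $\Z_p$-action generated by $\Phi$ to be continuous (i.e.\ $\Phi^{p^k}\to\mathrm{id}$). Your appeal to (ii) for ``associativity'' is misplaced --- associativity of a semilinear action is automatic; the genuine obstructions are splitting and continuity, neither of which you address. Absent these verifications, the existence of such an $L$ is essentially equivalent to the claim you are trying to prove, and the argument risks circularity.

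The paper sidesteps this entirely with a direct filtration count, and this is where condition (ii) is actually used. From (ii) and the surjection $(I/I^2)^{\otimes r}\twoheadrightarrow I^r/I^{r+1}$ one deduces that conjugation by $\widetilde{\gamma}^{p^n}$ is trivial on every graded piece $I^r/I^{r+1}$. Hence the map $\varphi\otimes I^r/I^{r+1}$ on $I^rF/I^{r+1}F$ reduces (since the $a_{ij}$ lie in $I$) to plain multiplication by $\widetilde{\gamma}^{p^n}-1$, with cokernel $(I^r/I^{r+1})^{\oplus d}$. Summing over $r$ gives $\dim_{\mathbb{F}_q}N/I^sN = d\cdot\dim_{\mathbb{F}_q}\La_{\mathcal{O}}(H)/I^s$ for every $s$, which forces the already-established surjection $\La_{\mathcal{O}}(H)^{\oplus d}\twoheadrightarrow N$ to be an isomorphism. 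That is the clean and complete role of (ii) in the paper's argument.
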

If the claim holds true, a $\La_{\mathcal{O}}(G_0)$-linear homomorphism $N \twoheadrightarrow M$ sending $\overline{f}_i$ to $m_i$ for each $i$ is 
surjective. Since $N$ is free over $\La_{\mathcal{O}}(H)$, this is what we want. 
\par 
We thus prove the above claim for the rest of the proof. 
Note that we have $\La_{\mathcal{O}}(G_0)/I \cong \mathbb{F}_q [[\Gamma ]]$. 
By applying the functor $\La_{\mathcal{O}}(G_0)/I \La_{\mathcal{O}}(G_0) 
\otimes_{\La_{\mathcal{O}}(G_0)} \cdot$ to the map $\varphi $, we obtain 
$$ 
\varphi_I : \ \oplus^d_{j=1} \mathbb{F}_q [[\Gamma^{p^n} ]] f'_j 
\xrightarrow{\times ({\widetilde{\gamma}}^{p^n} -1)} 
\oplus^d_{j=1} \mathbb{F}_q [[\Gamma^{p^n} ]] f_j . 
$$
Since we have $N/IN$ is isomorphic to the cokernel of the above map $\varphi_I $, 
$N/IN$ is a free $\mathbb{F}_q$-module of rank $d$. By applying 
the topological Nakayama Lemma (cf. \cite[Cor. in \S 3]{bh}) to the compact $\La_{\mathcal{O}}(H)$-module $N$, 
$N$ is generated by $\overline{f}_1 ,\ldots ,\overline{f}_d$ 
over $\La_{\mathcal{O}}(H)$. We will prove that $N$ is free of rank $d$ over 
$\La_{\mathcal{O}}(H)$ 
with this system of generators. Now let $r$ be an arbitrary natural number. 
Since we have a natural surjection from 
$r$-fold tensor product of $I/I^2$ to $I^r/I^{r+1}$, the conjugate action of 
${\widetilde{\gamma}}^{p^n}$ on $I^r /I^{r+1}$ is also trivial. Thus, 
by applying the functor $I^r/I^{r+1} \La_{\mathcal{O}}(G_0) 
\otimes_{\La_{\mathcal{O}}(G_0)} \cdot$ to the map $\varphi$,   
we obtain a $\La_{\mathcal{O}}(G_0)$-linear map 
$$
\varphi \otimes I^r /I^{r+1}:\ I^r F' /I^{r+1} F' \longrightarrow I^r F /I^{r+1} F
$$ 
which is again defined as a multiplication of $({\widetilde{\gamma}}^{p^n} -1)$. 
This proves 
\begin{align*}
\mathrm{dim}_{\mathbb{F}_q} N/I^s N & = \sum^{s-1}_{r=0}\mathrm{dim}_{\mathbb{F}_q} 
I^rN/I^{r+1} N \\ 
& = \sum^{s-1}_{r=0}\mathrm{dim}_{\mathbb{F}_q} 
(I^r /I^{r+1} )^{\oplus d} .
\end{align*}
Thus the cardinality of 
$ N/I^s N$ is equal to the cardinality of $(\La_{\mathcal{O}}(H)/I)^{\oplus d}$ 
for any natural number $s$, which implies that $N$ is free of rank $d$ 
over $\La_{\mathcal{O}}(H)$. This completes the proof of the claim.
\end{proof}
Using Key Lemma and Preliminary Theorem, we will prove our Main Theorem. 
%%%
\begin{proof}[Proof of Main Theorem]
We will prove the main theorem by two step arguments. 
\par 
First, we consider the situation where $G$ is a compact $p$-adic Lie group 
without any element of order $p$ and $H$ 
is its closed subgroup such that $G/H$ is isomorphic to $\Gamma$. 
Thus we dropped the assumption (H) of Preliminary Theorem but we 
still keep the assumption of non-existence of non-trivial element of order $p$ in $G$. 
\par 
Let $M$ be $\La_{\mathcal{O}}(G)$-module which is finitely generated over 
$\La_{\mathcal{O}}(H)$. 
By Preliminary Theorem, for a sufficiently large natural number $n$,  
we have a surjective $\La_{\mathcal{O}}(G_0)$-linear homomorphism 
$N \twoheadrightarrow M$ from a free $\La_{\mathcal{O}}(H)$-module of finite rank. 
Here $G_0$ is a unique open subgroup $G_0 \subset G$ of index $p^n$ 
containing $H$. Note that the module $N$ satisfies the condition (H) of Preliminary Theorem. 
We thus find a continuous 
character $\rho_0 : \Gamma^{p^n} \longrightarrow \mathbb{Z}_p^\times$ such that 
$N (\rho_0)_{U_0}$ is finite for any open normal subgroup $U_0$ of $G_0$. 
By the proof of Preliminary Theorem, we can choose uncountably many 
such $\rho_0$. Thus, we see that we can take $\rho_0$ as above 
so that the value of $\rho_0$ is contained in a open subgroup 
$1+p^n \mathbb{Z}_p$ of $\mathbb{Z}^\times_p$. Then, we take a 
continuous character $\rho : \Gamma \longrightarrow \mathbb{Z}_p^\times$ 
whose restriction to $\Gamma^{p^n}$ 
coincides with $\rho_0$. The twist $M(\rho)$ with this character is what 
we want in our Main Theorem. In fact, for any open normal subgroup $U$ of $G$, 
we have a surjection $N(\rho_0)_{U_0} \twoheadrightarrow M(\rho)_U$ 
taking an open normal subgroup $U_0$ of $G_0$ contained in $U$. 
Since $N(\rho_0)_{U_0}$ is finite by Preliminary Theorem, 
$M(\rho)_U$ must be finite. Thus we finished the proof of our Main Theorem 
under the assumption of non-existence of non-trivial element of order $p$ in $G$.
\par 
Now, we deduce our Main Theorem assuming that it is true 
under the assumption of non-existence of non-trivial element of order $p$ in $G$.  
We consider the situation where $G$ is a compact $p$-adic Lie group 
with elements of order $p$ and $H$ is its closed subgroup such that 
$G/H$ is isomorphic to $\Gamma$. 
Let $M$ be $\La_{\mathcal{O}}(G)$-module which is finitely generated over 
$\La_{\mathcal{O}}(H)$. Let $G'$ be   a uniform open normal subgroup of $G$ (cf. 
\cite[Chap. III, \S (3.1)]{la}), 
which is automatically without any elements of order $p$. Let $H'$ be the intersection of 
$H$ and $G'$. Since $M$ is finitely generated over $\La_{\mathcal{O}}(H)$ 
and since $H'$ is of finite index in $H$, $M$ is finitely generated over 
$\La_{\mathcal{O}}(H')$. According to the result in our first step, 
there exist a continuous character $\rho':G'/H'\longrightarrow \mathbb{Z}_p^\times$ 
such that $M(\rho' )_{U'}$ 
is finite for every open subgroup $U'$ of $G'$. 
Note that $G'/H'$ is naturally regarded as an open subgroup of $G/H$. 
Thus by choosing $\rho'$ so that the image of $\rho'$ is enough small in 
$\mathbb{Z}^\times_p$ compared to the index of $G'/H'$ in $G/H$, 
there exists a continuous character $\rho : G/H \longrightarrow \mathbb{Z}_p^\times$ 
whose restriction to $G'/H'$ coincides with $\rho'$. 
Now, for any open normal subgroup $U$ of $G$, we take an open normal subgroup 
$U'$ of $G'$ which is contained in $U$. We have a natural map 
$M(\rho' )_{U'} \longrightarrow M(\rho )_U$ where $M(\rho' )_{U'}$ is finite 
by the choice of $\rho'$ and by our discussion above.   
Unlike as in the first step, $M(\rho' )_{U'} \longrightarrow M(\rho )_U$ 
is not necessarily surjective. However, the cokernel of this map is still 
finite by construction. We thus deduce that $M(\rho )_U$ is finite. 
This completes the proof of Main Theorem.
\end{proof}
\begin{rem}[$p$-torsion modules]
For a compact $p$-adic Lie group $G$ without any element of order $p$, it is well-known that $\La_O(G)$ is left and right noetherian. 
Let $N$ be a finitely generated $p$-primary torsion left $\La_{\mathcal O}(G)$ module. Then, we have $N = N[p^r]$ for some $r \in \N$. For any open normal  subgroup $U$ of $G$, 
$N_U$ is a finitely generated $\Z/{p^r\Z}[G/U]$ module. In other words, $N_U$ is  always finite when $N$ is of $p$-primary torsion. 
\par 
For a finitely generated torsion $\La_{\mathcal O}(G)$ module $M$,  we consider the exact sequence
$$ 0 \lra M(p) \lra M \lra M/M(p) \lra 0,$$ 
where $M(p)$ is the largest $p$-primary torsion submodule of $M$. 
Then, from the  preceding discussion, it is clear  that in the situation of  Main Theorem, for any continuous $\rho: \Gamma \lra \Z_p^\times$ and 
for any open normal subgroup $U$ of $G$, $M(\rho)_U$ is finite if and only if $  \left( \dfrac{M}{M(p)}(\rho) \right)_U$ is finite. 
\par
In particular, when we want to apply the result in Main Theorem to arithmetic situations  coming from Selmer groups of 
certain Galois module $A$, we remark that $\mathcal{S}^\vee_A(\rho)_U$ is finite if and only if $\left( \dfrac{\mathcal{S}^\vee_A}{\mathcal{S}^\vee_A(p)}(\rho) \right)_U$ is finite.
\end{rem}


\begin{thebibliography}{9999999}
\bibitem[BH]{bh} 
P.~Balister, S.~Howson, 
\textit{Note on Nakayama's lemma for compact $\Lambda$-modules}, 
Asian J. Math. 1 (1997), no. 2, 224-229. 
\bibitem[CFKSV]{cfksv} J.~Coates, T.~Fukaya, K.~Kato, R.~Sujatha and O.~Venjakob, 
 \textit{The $GL_2$ main conjecture for elliptic curves without complex multiplication}, 
 Publ. Math. IHES, {\bf} 101 (2005) 163-208.
\bibitem[Gr]{gr} R.~Greenberg, \textit{Iwasawa theory for p-adic representations}, Algebraic number theory, Adv. Stud. Pure Math., 17, Academic Press, 1989, 97-137. 
\bibitem[HV]{hv}Y.~Hachimori and O.~Venjakob, \emph{Completely faithful Selmer groups over Kummer extensions}, Doc. Math. 2003, Extra Vol. (Kato), 443-478.
\bibitem[JO]{jo} S.~Jha and T.~Ochiai, 
 \textit{Functional equation of Selmer groups over $p$-adic Lie extension}, 
 in preparation.
\bibitem[La]{la} M. Lazard, \textit{Groupes analytiques $p$-adiques}, Inst. Hautes \'{E}tudes Sci. Publ. Math. No. 26 (1965) 389-603.
\bibitem[Pe]{pr} B.~Perrin-Riou, 
 \textit{Groupes de Selmer et Accouplements; Cas Particulier des Courbes Elliptiques}, 
 Documenta Mathematica, Extra Volume Kato (2003) 725-760. 
 \bibitem[Za1]{za1} G. Z\'abr\'adi, \emph{Characteristic elements, pairings and functional equations over the false Tate curve extension},
Math. Proc. Cambridge Philos. Soc.  144,  no. 3 (2008), 535-574.
 \bibitem[Za2]{za2} G. Z\'abr\'adi, \emph{Pairings and functional equations over the $GL_2$-extension}, Proc. Lond. Math. Soc. (3) 101, no. 3  (2010)  893-930. 
%\bibitem \textit{A non-commutative Weierstrass preparation theorem and applications to Iwasawa theory}, 
 %J. reine angew. Math. {\bf 559} (2003), 153-191.
%\bibitem[Ve2]{v2} O.~Venjakob,  \textit{Characteristic Elements in Noncommutative Iwasawa Theory},  \emph{J. reine angew. Math.} {\bf 583} (2005) 193-236.
\end{thebibliography}
\end{document}